\documentclass[11pt,amstex,amssymb]{amsart}
\usepackage{amsmath,amsthm,amsfonts,amssymb,amscd}
\usepackage[latin1]{inputenc}
\usepackage[all]{xy}
\usepackage[dvips]{graphicx}
\usepackage{amsmath}
\usepackage{amsthm}
\usepackage{amsfonts}
\usepackage{amssymb}

\oddsidemargin=5pt \evensidemargin=5pt \textwidth=16.5truecm
\textheight=21truecm

\newtheorem{Theorem}{Theorem}[section]
\newtheorem{Lemma}[Theorem]{Lemma}

\newtheorem{Proposition}[Theorem]{Proposition}

\newtheorem{Claim}[Theorem]{Claim}

\def\fa{{\mathcal{F}}}

\def\ord{\operatorname{{ord}}}

\def\ov{\overline}

\def\bc{{\mathbb{C}}}
\def\bn{{\mathbb{N}}}

\def\GL{\operatorname{{GL}}}

\def\Diff{\operatorname{{Diff}}}

\def\Hol{\operatorname{{Hol}}}

\def\Id{\operatorname{{Id}}}

\def\Diff{\operatorname{{Diff}}}

\def\SO(3){\operatorname{{SO(3)}}}

\def\bc{\operatorname{{\mathbb C}}}

\def\Hol{\operatorname{{Hol}}}
\def\fa{\operatorname{{\mathcal F}}}
\def\Diff{\operatorname{{Diff}}}

\def\ov\bc{\operatorname{\overline{\mathbb{C}}}}

\title[Stable compact leaves]{Meagerness of the set of   compact leaves for transversely holomorphic foliations}

\author{Bruno Scardua}
\begin{document}

\maketitle

\section{Introduction}

Some of the  most important important aspects of the geometry of a foliation are linked to its set of compact leaves. This is subject of remarkable achievements  like
the celebrated  stability theorems of Reeb (\cite{Godbillon, Reeb1,Camacho-LinsNeto}). Motivated by these we ask about the largeness of the set of compact leaves.   One important question in this direction is the following: {\it If a codimension one smooth foliation on a compact
manifold has  infinitely many compact leaves, then is it true that
all leaves are compact?} The answer is clearly no, but this  is true
for (transversely) real analytic foliations of codimension one on compact manifolds (\cite{haefliger}).
On the other hand, there are versions of  Reeb stability results  for the class of {\em holomorphic}
 foliations (\cite{Brunella}). In the holomorphic framework,  it is proved in
\cite{Brunella-Nicolau}
 that a (non-singular) transversely holomorphic codimension one
 on a compact connected manifold admitting
infinitely many compact leaves exhibits a transversely meromorphic
first integral. The problem of bounding the number of closed leaves of a holomorphic
foliation is known (at least in the complex algebraic framework) as
{\it Jouanolou's problem}, thanks to the pioneering results in
\cite{Jouanolou} and has a wide range of contributions and
applications in the Algebraic setting.

 Motivated by the above discussion,   in \cite{Camacho-Scarduameasure},  we focus on the problem of  existence
of a suitable compact leaf under the hypothesis of existence of a
positive measure  set of  of compact leaves.
In this paper we pursue the study of the structure of the set of compact leaves, by proving its meagerness provided that there are no stable compact leaves. We state it as follows:

\begin{Theorem}
\label{Theorem:measuremainfoliations} Let $\fa$ be a transversely
holomorphic foliation on a  connected complex manifold $M$.
 Denote by $\Omega(\fa)\subset M$ the subset of compact
leaves of $\fa$.  Then we have two possibilities:
\begin{itemize}

\item[{\rm (i)}] $\fa$ has some compact leaf with
finite holonomy group and therefore $\Omega(\fa)$ contains some open subset.
\item[{\rm(ii)}] The set $\Omega(\fa)$ is meager. Indeed, $\Omega(\fa)$ is contained in a countable union of
proper analytic subsets of $M$. 
\end{itemize}
\end{Theorem}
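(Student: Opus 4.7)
The plan is to establish the dichotomy via the classical ``finite vs.\ infinite holonomy'' alternative for a compact leaf: a Reeb-type stability theorem handles case~(i), while a fixed-point analysis of the holonomy representation on a transversal gives case~(ii). If some compact leaf $L$ of $\fa$ has finite holonomy group, the transversely holomorphic version of Reeb's complete stability theorem (cf.~\cite{Brunella}) produces a saturated tubular neighbourhood $U_L$ of $L$ on which $\fa$ is a finite quotient of a holomorphic fibration; every leaf in $U_L$ is then compact, so $U_L\subset\Omega(\fa)$ and $\Omega(\fa)$ contains a nonempty open set, proving~(i). Since $M$ is Baire, (i) and~(ii) are mutually exclusive; from here on I assume that every compact leaf of $\fa$ has infinite holonomy and aim at~(ii).

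The local heart of the argument is a fixed-point computation on a transversal. Given a compact leaf $L$, choose $p\in L$ and a holomorphic transversal $\Sigma_p$ through $p$. The holonomy representation gives a countable (because $\pi_1(L,p)$ is finitely generated) subgroup
\[
G_L \;=\; \Hol(L,p) \;\subset\; \Diff(\Sigma_p, p),
\]
which is infinite by our standing assumption. A point $q\in\Sigma_p$ lies on a compact leaf of $\fa$ if and only if its $G_L$-orbit is finite, equivalently its stabilizer $(G_L)_q\le G_L$ has finite index; since $G_L$ is infinite, so is $(G_L)_q$, and in particular $(G_L)_q$ contains some $g\neq\Id$, forcing $q\in\operatorname{Fix}(g)$. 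Each such $\operatorname{Fix}(g)$ is a proper complex analytic subset of a representative of $\Sigma_p$, so the set of compact-leaf representatives in $\Sigma_p$ is contained in the countable union
\[
\bigcup_{g\in G_L\setminus\{\Id\}} \operatorname{Fix}(g).
\]

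Next I would globalize by $\fa$-saturation. For each $g\in G_L\setminus\{\Id\}$, the $\fa$-saturation of $\operatorname{Fix}(g)$ in a foliated tubular neighbourhood $U_L$ of $L$ is an $\fa$-invariant proper analytic subset of $U_L$, the transverse holomorphy of $\fa$ guaranteeing that a saturated transverse-analytic set is analytic in $U_L$. Consequently $\Omega(\fa)\cap U_L$ is contained in a countable union of proper analytic subsets of $U_L$. By second countability of $M$, I can extract countably many compact leaves $L_1,L_2,\ldots$ whose foliated neighbourhoods $U_{L_i}$ already cover $\Omega(\fa)$; taking the union of all the corresponding local analytic sets produces a countable family of proper analytic subsets of $M$ whose union contains $\Omega(\fa)$. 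Meagerness follows, since every proper analytic subset of $M$ has empty interior and the countable union of nowhere dense sets is meager.

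The main obstacle I foresee is the globalization step: one must verify that $\fa$-saturations of transverse analytic fixed sets are genuine (closed) complex analytic subsets of the ambient $M$, and that the local families on different tubular neighbourhoods assemble into a single countable global family of proper analytic subsets of $M$. The transverse holomorphy of $\fa$ on the complex manifold $M$ is what makes the saturated sets analytic, while second countability keeps the family countable; both points require care. The logical backbone, however, is the elementary observation that a finite orbit of an infinite holomorphic pseudogroup must be contained in the fixed-point set of some non-identity element, together with the classical fact that fixed-point sets of non-trivial holomorphic germs are proper analytic.
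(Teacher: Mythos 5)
Your case (i) and your overall strategy for (ii) --- trap the compact leaves inside fixed-point sets of non-trivial holonomy elements, observe that these are proper analytic hence nowhere dense, and take a countable union --- are the same as the paper's. But the key local step has a genuine gap: the orbit--stabilizer argument applied to $G_L=\Hol(L,p)\subset\Diff(\Sigma_p,p)$. This is a group of \emph{germs} at $p$; it does not act on $\Sigma_p$, so for $q\neq p$ neither ``the $G_L$-orbit of $q$'' nor ``the stabilizer $(G_L)_q$'' is defined, and one must work with the holonomy pseudogroup, whose elements are only partially defined with domains that shrink as the word length in the generators grows. For a pseudogroup the chain ``finite orbit $\Rightarrow$ finite-index stabilizer $\Rightarrow$ infinite stabilizer $\Rightarrow$ contains some $g\neq\Id$'' breaks down: to produce by pigeonhole a non-identity germ fixing a point $q$ whose leaf meets $\Sigma_p$ in $k$ points, you need at least $k+1$ \emph{distinct} germs of $G_L$ to admit representatives defined at $q$, and since $k$ is unbounded as the compact leaf varies while the domains of definition shrink, no fixed neighbourhood of $p$ works for all of $\Omega(\fa)\cap\Sigma_p$ at once. (Your argument is valid verbatim only for suspensions, where the holonomy is an honest group action on the fibre.)

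This is precisely the difficulty the paper's proof is organized around. It first stratifies $\Omega(\fa)=\bigcup_n\Omega(\fa,n)$ by the number $n$ of intersections of a leaf with a fixed finite system of transversals (using compactness of $M$), and then, for a fixed $n_0$ and a compact leaf $L_0$ with infinite holonomy, it invokes Schur's theorem (Lemma~\ref{Lemma:finiteexponentgerms}: a finitely generated infinite group of germs cannot be periodic) to extract a \emph{single} infinite-order element $h$, chooses one disc $W$ on which the finitely many iterates $h,\dots,h^{n_0!}$ are all defined, and concludes $\Omega(\fa,n_0)\cap W\subset\bigcup_{\ell=1}^{n_0}\{z:h^\ell(z)=z\}$, a finite union of proper analytic subsets of a fixed connected domain. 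Your proposal omits both the stratification and this uniform-domain bookkeeping, and also does not use the Burnside--Schur input at all; it cannot be completed without reintroducing them (or an equivalent device). As a secondary point, the globalization you flag as delicate is handled in the paper without any saturation argument: one shows that the union of leaves in $\Omega(\fa,n)$ is closed in $M$ and then covers $M$ by finitely many of the discs $W$ together with the complement of that closed set.
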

Recall that a subset $X \subset M$ is called {\it meager} if
it is contained in a countable union of closed subsets with empty interior in $M$. Equivalently, its complementary $M\setminus X$ is {\it residual}, meaning that it contains a countable intersection of dense open subsets of $M$.
A compact leaf with finite holonomy group will be called {\it stable} (cf. \cite{Godbillon}). In view of the Reeb local stability theorem (\cite{Camacho-LinsNeto, Godbillon, Reeb1}, a stable leaf always belongs to the interior of the set of $\Omega(\fa)$, union of compact leaves, therefore  Theorem~\ref{Theorem:measuremainfoliations} can be stated as:

{\it A transversely holomorphic foliation on a compact complex manifold, exhibits a compact stable
leaf if and only if the set of compact leaves is not a meager  subset of  the manifold}.

The corresponding version for groups of diffeomorphisms of a complex manifold is:

\begin{Theorem}
\label{Theorem:maingroups}
 Let $G\subset \Diff(F)$ be a  subgroup
of holomorphic diffeomorphisms of a complex connected manifold $F$.
Denote by $\Omega(G)$ the subset of points $x \in F$ such that the
$G$-orbit of $x$ is finite. There are two mutually exclusive possibilities:

\begin{itemize}

\item[{\rm (i)}] $G$ is a finite group and $\Omega(G)=F$.
\item[{\rm(ii)}] The set $\Omega(G)$ is meager.
\end{itemize}

\end{Theorem}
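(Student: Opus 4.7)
The plan is to stratify $\Omega(G)$ by orbit size. For each $n\geq 1$ set $\Omega_n=\{x\in F : |G\cdot x|\leq n\}$, so that $\Omega(G)=\bigcup_{n\geq 1}\Omega_n$. The first step is to show that each $\Omega_n$ is an analytic subset of $F$. This follows from the description
\[
\Omega_n \;=\; \bigcap_{(g_0,\ldots,g_n)\in G^{n+1}}\;\bigcup_{0\leq i<j\leq n} \operatorname{Fix}(g_i^{-1}g_j),
\]
together with the observations that each $\operatorname{Fix}(h)$ is a (closed) analytic subset of $F$ and that an arbitrary intersection of analytic subsets of a complex manifold is again analytic (by Noetherianity of the stalks $\O_{F,x}$, which forces the intersection to be locally a finite one).

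Since $F$ is connected, each $\Omega_n$ is either all of $F$ or a proper analytic subset, and in the latter case it is closed with empty interior. If every $\Omega_n$ is a proper analytic subset, then $\Omega(G)$ is a countable union of nowhere dense closed sets and is therefore meager, yielding conclusion (ii). The real work is to handle the remaining case, in which $\Omega_n=F$ for some $n$: every $G$-orbit has at most $n$ points, and I must show that this forces $G$ to be finite (conclusion (i)).

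The argument I have in mind is the following. If every orbit has at most $n$ points, then each $g\in G$, viewed as a permutation of $F$, has all cycles of length $\leq n$, so $g^{n!}=\Id$; hence $G$ has exponent dividing $n!$. Fix $x\in F$ and consider the stabilizer $G_x$ (of index $\leq n$ in $G$) together with its differential representation $d\colon G_x\to\GL(T_xF)$. By Bochner's linearization theorem, any finite-order holomorphic diffeomorphism fixing $x$ is linearizable in a neighborhood of $x$, so $dg_x=\Id$ forces $g$ to be the identity near $x$ and hence on all of $F$ by analytic continuation. Therefore $d$ is injective, and its image is a subgroup of $\GL_{\dim F}(\bc)$ of bounded exponent. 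A standard Zariski-closure argument then shows that any such linear group is finite: its identity component would be a connected complex algebraic group in which every element has finite order, and hence would be trivial. So $G_x$ is finite, and therefore so is $G$.

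The main obstacle I expect is precisely this last step: passing from uniformly bounded orbits to finiteness of $G$. The stratification, the analyticity of each $\Omega_n$, and the dichotomy coming from connectedness of $F$ are essentially formal; the substantive content lies in combining Bochner's theorem (which uses the holomorphic structure) with the algebraic input on complex linear groups of bounded exponent.
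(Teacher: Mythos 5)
Your proof is correct, and its overall strategy coincides with the paper's: stratify $\Omega(G)$ by orbit cardinality, use the fact that an analytic subset of the connected manifold $F$ is either all of $F$ or closed and nowhere dense to obtain the dichotomy ``meager versus bounded orbits,'' deduce that bounded orbits force $G$ to have finite exponent, and then conclude finiteness. The differences are in execution, and they are worth noting. First, where you prove directly that each stratum $\Omega_n$ is an analytic set (as an arbitrary intersection, handled by Noetherianity of $\O_{F,x}$, of finite unions of fixed-point sets), the paper argues per element: if some $\Omega(G,k)$ is non-meager, then for each $f\in G$ the non-meager set $\Omega(f,k)$ is covered by the analytic sets $\{x: f^{\ell}(x)=x\}$, $\ell\le k$, so one of these is non-meager and hence equals $F$, giving $f^{k_f}=\Id$. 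Your version is slightly stronger (it exhibits $\Omega(G)$ as a countable union of proper analytic subsets, parallel to Theorem~\ref{Theorem:measuremainfoliations}(ii)), at the cost of the arbitrary-intersection lemma; the paper's version avoids that lemma entirely. Second, for the passage from ``finite exponent with a finite orbit'' to ``finite,'' the paper simply invokes Proposition~\ref{Proposition:finitenesslemma} (imported from \cite{Camacho-Scarduameasure}) together with Burnside's Theorem~\ref{Theorem:Burnside}, whereas you supply a self-contained proof: Bochner linearization at a fixed point of the finite-index stabilizer $G_x$ makes the differential representation $G_x\to\GL(T_xF)$ injective, and the image, being of bounded exponent, is finite by Burnside (your Zariski-closure argument is in effect a proof of Burnside's theorem, which the paper states and could be cited instead). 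Both routes are sound; yours has the merit of making the finiteness step explicit rather than delegated to a reference.
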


\section{Periodic holonomy groups and holonomy groups of finite exponent}
\label{section:generalities}

\subsection{Holonomy groups}

Let $\fa$ be a codimension $k$  holomorphic foliation on a complex
manifold $M$.  Given a point $p\in M$, the leaf through $p$ is
denoted by $L_p$. We denote by $\Hol(\fa,L_p)=\Hol(L_p)$ the
holonomy group of  $L_p$. This is a conjugacy class of equivalence,
and we shall denote by $\Hol(L_p,\Sigma_p,p)$ its representative
given by the local representation of this holonomy calculated with
respect to a local transverse section $\Sigma_p$ centered  at the
point $p\in L_p$. The group  $\Hol(L_p, \Sigma_p,p)$ is therefore a
subgroup of the group of germs $\Diff(\Sigma_p,p)$ which is
identified with the group $\Diff(\bc^k,0)$ of germs at the origin
$0\in \mathbb C^k$ of complex diffeomorphisms.


\subsection{Groups of finite exponent}

Next we present Burnside's and Schur's results on periodic linear
groups.  Let $G$ be a group with identity $e_G\in G$. The group is
{\it periodic} if
 each element of $G$ has finite order.
 A periodic group $G$ is {\it periodic of bounded exponent} if
there is an uniform upper bound for the orders of its elements. This
is equivalent to the existence of $m \in \mathbb N$ with $g^m = 1$
for all $g \in G$ (cf. \cite{Santos-Scardua}).  Because of this, a
group which is periodic of bounded exponent is also called a group
of {\it finite exponent}.
 The following classical results about finite exponent linear groups are  due to Burnside and Schur.

\begin{Theorem}[Burnside, 1905 \cite{Burnside}, Schur, 1911
\cite{Schur}]
\label{Theorem:Burnside} Let $G\subset \GL(k,\bc)$ be a complex
linear group.
\begin{enumerate}

\item[{\rm(i)}]{\rm(Burnside)} If $G$ is of finite exponent $\ell$  {\rm(}but not
necessarily finitely generated{\rm)} then $G$ is finite; actually we
have $|G| \le \ell^{k^2}$.

\item[{\rm(ii)}] {\rm(Schur)} If $G$ is  finitely generated and periodic
{\rm(}not necessarily of bounded exponent{\rm)} then $G$ is finite.

\end{enumerate}
\end{Theorem}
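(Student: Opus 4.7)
The plan is to establish both parts via a common trace-based strategy, with (ii) ultimately reduced to (i). For (i), the central observation is that every $g\in G$ satisfies $g^\ell=\Id$ and is therefore diagonalisable over $\bc$ with eigenvalues among the $\ell$-th roots of unity, so $\tr(g)$ is a sum of $k$ such roots and ranges over a set of cardinality at most $\ell^k$.

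Concretely, I would introduce the $\bc$-linear span $A:=\operatorname{span}_{\bc}(G)\subset M_k(\bc)$. Since $G$ is a group, $A$ is closed under multiplication and contains $\Id$, so $A$ is a unital subalgebra of $M_k(\bc)$ of dimension $d:=\dim A\le k^{2}$. Choose $g_1,\dots,g_d\in G$ that form a $\bc$-basis of $A$ and define
\[
\varphi : G \longrightarrow \bc^{d}, \qquad \varphi(g)=(\tr(gg_1),\dots,\tr(gg_d)).
\]
Each coordinate $\tr(gg_i)$ is the trace of an element of $G$, so it takes at most $\ell^k$ values; this gives $|\operatorname{Image}\varphi|\le \ell^{kd}$, and finiteness of $G$ is reduced to injectivity of $\varphi$ (the sharper bound $|G|\le\ell^{k^{2}}$ is obtained by a more careful count using the Galois-orbit structure of the eigenvalues). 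To prove $\varphi$ injective, observe that $\varphi(g)=\varphi(g')$ forces $\tr((g-g')Y)=0$ for every $Y\in A$, so $g-g'$ lies in the radical of the trace form
\[
\mathfrak{r}:=\{X\in A : \tr(XY)=0\ \forall\,Y\in A\}.
\]
A direct check shows $\mathfrak{r}$ is a two-sided ideal of $A$, and for any $X\in\mathfrak{r}$ the identities $\tr(X^{n})=\tr(X\cdot X^{n-1})=0$ for every $n\ge1$ together with Newton's formulae make $X$ nilpotent. Setting $h:=g^{-1}g'\in G\subset A$, the ideal property of $\mathfrak{r}$ gives $h-\Id\in\mathfrak{r}$, so $h-\Id$ is nilpotent; but $h^\ell=\Id$ makes $h$, and hence $h-\Id$, diagonalisable, and a diagonalisable nilpotent matrix vanishes, so $h=\Id$ and $g=g'$.

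For (ii) one reduces to (i) by showing that a finitely generated periodic subgroup of $\GL(k,\bc)$ must already have bounded exponent. Let $K\subset\bc$ be the field generated over $\bq$ by the entries of a finite generating set of $G$ together with their inverses, so $G\subset\GL(k,K)$ and $K$ is finitely generated. A classical arithmetic argument, exploiting that the minimal polynomial over $K$ of any element of $\GL(k,K)$ has degree at most $k$ while the cyclotomic extensions $K(\zeta_d)/K$ have degrees growing with $d$, then forces the orders of torsion elements of $\GL(k,K)$ to be uniformly bounded by some integer $\ell$, after which (i) applies.

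The main obstacle in this plan is the injectivity step of (i): one must combine the semisimplicity of each individual $g\in G$, coming from $g^\ell=\Id$, with the purely algebraic fact that the radical of the trace form on a unital matrix subalgebra consists of nilpotent elements. For (ii) the delicate point is instead arithmetic, namely the uniform boundedness of orders of torsion elements of $\GL(k,K)$ for finitely generated $K\subset\bc$; this is where the argument leaves linear algebra and relies essentially on the structure of cyclotomic extensions.
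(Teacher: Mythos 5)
The paper does not prove this statement at all: it is quoted as a classical theorem with citations to Burnside (1905) and Schur (1911), so there is no in-paper argument to compare yours against. Judged on its own, your proof is essentially the standard one and is sound. For (i), the trace argument is correct: $\varphi$ is well defined, each coordinate takes at most $\ell^{k}$ values, the radical $\mathfrak{r}$ of the trace form on $A=\operatorname{span}_{\bc}(G)$ is a two-sided ideal consisting of nilpotent elements (via $\tr(X^{n})=0$ and Newton's identities in characteristic zero), and the step $h-\Id\in\mathfrak{r}$ combined with diagonalisability of $h$ (from $h^{\ell}=\Id$) forces $h=\Id$. The one shortfall is quantitative: your count gives $|G|\le \ell^{k\dim A}\le \ell^{k^{3}}$, not the bound $\ell^{k^{2}}$ asserted in the statement; you flag this but do not supply the refinement, so as written the sharp bound is unproved. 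For (ii), the reduction to (i) via bounded exponent is the standard route to Schur's theorem, but the key input --- that for a finitely generated field $K\subset\bc$ the degrees $[K(\zeta_{d}):K]$ grow, so torsion elements of $\GL(k,K)$ have uniformly bounded order --- is invoked rather than proved; it is true (compare $\phi(d)=[\bq(\zeta_{d}):\bq]$ against the finite degree of the algebraic part of $K$ over $\bq$), but it is the genuinely nontrivial arithmetic step and deserves at least the one-line justification just indicated. With those two caveats, the proposal is a correct proof of finiteness in both parts, which is all the rest of the paper actually uses.
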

\noindent Now we shall use the above result in the  study of finite exponent groups of germs of complex diffeomorphisms. The First step is the following:

\subsection{Holonomy groups of finite exponent}
Let us collect some results for groups of germs finite exponent. These results proved in \cite{Camacho-Scarduameasure} are stated below:
\begin{Lemma}[\cite{Santos-Scardua,Camacho-Scarduameasure}]
\label{Lemma:finiteexponentgerms} About periodic groups of germs of
complex diffeomorphisms we have:
\begin{enumerate}

\item A finitely
generated periodic subgroup  $G \subset \Diff(\bc^k,0)$ is
necessarily finite.   A {\rm(}not necessarily finitely
generated{\rm)} subgroup
 $G \subset \Diff(\bc^k,0)$ of finite exponent is necessarily finite.

\item  Let $G\subset \Diff(\mathbb C^k,0)$ be a finitely
generated subgroup.  Assume that there is an invariant connected
neighborhood $W$ of the origin in $\mathbb C^k$ such that each point
$x$ is periodic for each element $g \in G$. Then $G$ is a finite
group.

\item Let $G \subset \Diff(\bc^k,0)$ be a {\rm(}not necessarily finitely generated{\rm)} subgroup
such that for each point $x$ close enough to the origin,  the
pseudo-orbit of $x$ is finite   of {\rm(}uniformly bounded{\rm)}
order $\le \ell$ for some $\ell \in \bn$, then $G$ is finite.

\end{enumerate}

\end{Lemma}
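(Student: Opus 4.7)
The plan is to reduce all three parts to Theorem~\ref{Theorem:Burnside} by using the linear part homomorphism $J\colon \Diff(\bc^k,0)\to\GL(k,\bc)$, $g\mapsto Dg(0)$, and then controlling the kernel via a Taylor expansion argument.

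For part (1), I would first observe that if $G$ is periodic (resp.\ of finite exponent), then its image $J(G)\subset\GL(k,\bc)$ inherits the same property. If $G$ is finitely generated and periodic, Schur's theorem gives $|J(G)|<\infty$; if $G$ has finite exponent $\ell$ (without any finite generation), Burnside's theorem gives $|J(G)|\le \ell^{k^2}$. In either case, the finite-index subgroup $G_0:=G\cap\Ker J$ is still periodic (and in the second case still of finite exponent $\ell$). The key step is then to show $G_0=\{\Id\}$: if $g\in G_0\setminus\{\Id\}$, write the Taylor expansion $g(z)=z+P_r(z)+\cdots$ where $P_r$ is the first nonzero homogeneous term of degree $r\ge 2$; an easy induction gives $g^n(z)=z+nP_r(z)+O(\|z\|^{r+1})$, which contradicts $g^n=\Id$ for some $n\ge 1$. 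Hence $G\cong J(G)$ is finite. This covers both statements in (1).

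For part (2), the hypothesis is stronger pointwise but weaker globally: each $g\in G$ is only assumed to have finite orbits at every point of the invariant neighborhood $W$. The reduction to (1) is to upgrade ``each orbit finite'' to ``each element periodic''. For fixed $g\in G$, the fixed-point sets $F_n:=\{x\in W:g^n(x)=x\}$ are analytic subsets of $W$, and by hypothesis $W=\bigcup_{n\ge 1}F_n$. By the Baire category theorem some $F_{n_0}$ has nonempty interior in $W$, and since $W$ is connected and $F_{n_0}$ analytic, we conclude $F_{n_0}=W$, i.e.\ $g^{n_0}=\Id$ on $W$. Thus every element of $G$ is periodic, so the finitely generated group $G$ is periodic, and part (1) applies.

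For part (3), the uniform bound $\le \ell$ on the order of pseudo-orbits of points near $0$ means that for each $g\in G$, the iterates $g,g^2,\dots,g^{\ell!}$ produce at most $\ell$ distinct points on any orbit, forcing $g^{\ell!}$ to fix every point close to $0$. By analytic continuation $g^{\ell!}=\Id$ as a germ, so $G$ has finite exponent $\ell!$, and the finite-exponent half of part (1) (which does not require finite generation) finishes the proof.

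The main obstacle is part (1): once the kernel-of-$J$ argument is in place, parts (2) and (3) are essentially unwinding the hypotheses into the framework of (1). The delicate point is insisting that the Taylor argument kills \emph{all} nontrivial periodic tangent-to-identity germs simultaneously, which is what allows the second sentence of (1) to be stated without any finite generation assumption.
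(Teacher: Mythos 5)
Your proposal is correct, and it follows exactly the route the paper sets up: the lemma is quoted from \cite{Santos-Scardua,Camacho-Scarduameasure} without proof here, but the preceding Burnside--Schur theorem is stated precisely so that one reduces to $\GL(k,\bc)$ via the linear part $g\mapsto Dg(0)$ and kills the tangent-to-identity kernel with the classical expansion $g^n(z)=z+nP_r(z)+O(\|z\|^{r+1})$, which is what you do. Your reductions of (2) via Baire category plus the identity theorem on the analytic sets $F_n$, and of (3) via the uniform bound forcing $g^{\ell!}=\Id$ and hence finite exponent, are the standard arguments from the cited references and are sound.
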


\begin{Proposition}[Finiteness lemma, \cite{Camacho-Scarduameasure}]
\label{Proposition:finitenesslemma}
Let $G$ be a  subgroup of holomorphic
diffeomorphisms of a connected complex manifold $F$. Assume that:

\begin{enumerate}

\item $G$ is periodic of finite exponent or $G$ is finitely generated and periodic.

\item $G$ has a finite orbit in $F$.

\end{enumerate}
Then $G$ is finite.
\end{Proposition}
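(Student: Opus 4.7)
The plan is to reduce the claim to an application of Lemma~\ref{Lemma:finiteexponentgerms}(1), by passing to a finite-index subgroup that fixes a common point and then studying its action on the germ at that point.

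Let $\mathcal{O}=\{x_1,\dots,x_n\}$ be the given finite orbit, and consider the natural permutation representation $\pi\colon G\to S_n$. Its kernel $G_0$ is the subgroup of $G$ that fixes each $x_i$ pointwise, and $[G:G_0]\le n!$. Since finiteness of $G_0$ forces finiteness of $G$, it suffices to prove that $G_0$ is finite.

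Next, I would take germs at $x_1$: since $G_0$ fixes $x_1$, there is a natural homomorphism
\[
\rho\colon G_0 \longrightarrow \Diff(F,x_1)\cong \Diff(\bc^k,0),\qquad k=\dim_{\bc}F.
\]
Because $F$ is connected, the identity principle for holomorphic maps ensures that any two global biholomorphisms of $F$ that agree on an open subset must agree everywhere; hence $\rho$ is injective. The image $\rho(G_0)$ therefore inherits every algebraic property of $G_0$. In particular, if $G$ is periodic of finite exponent, then so is $G_0$ (as a subgroup) and hence $\rho(G_0)$; if instead $G$ is finitely generated and periodic, then $G_0$ is periodic and, by Schreier's finite-index lemma, finitely generated, and the same properties pass to $\rho(G_0)$.

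In either case, $\rho(G_0)\subset \Diff(\bc^k,0)$ satisfies the hypotheses of Lemma~\ref{Lemma:finiteexponentgerms}(1), so $\rho(G_0)$ is finite. Injectivity of $\rho$ then gives $|G_0|<\infty$, and $|G|\le n!\cdot |G_0|<\infty$. The only genuinely non-routine step is the injectivity of $\rho$, which I expect to be the conceptual pivot: it is precisely the connectedness of $F$ (without which one could assemble independent germs on distinct components and destroy finiteness) that allows us to convert a statement about global biholomorphisms into a statement about germs, so that the germ-level finiteness results of Lemma~\ref{Lemma:finiteexponentgerms} can be applied.
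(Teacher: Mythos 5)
Your proof is correct. Note that the paper itself gives no proof of this Proposition --- it is imported verbatim from \cite{Camacho-Scarduameasure} --- so there is no in-paper argument to compare against; your route (pass to the finite-index stabilizer $G_0$ of a point of the finite orbit, inject $G_0$ into $\Diff(\bc^k,0)$ by taking germs, using the identity principle on the connected manifold $F$ for injectivity, observe that finite exponent passes to subgroups while finite generation passes to finite-index subgroups by Schreier's lemma, and finish with Lemma~\ref{Lemma:finiteexponentgerms}(1)) is the natural and standard one, and every step checks out.
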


\section{Meagerness versus finiteness}

Let us now prove Theorems~\ref{Theorem:measuremainfoliations} and
\ref{Theorem:maingroups}. For sake of simplicity we will adopt the following notation:
if a subset $X\subset M$ is not a zero measure subset then we shall write $\mu(X)>0$. This may cause no confusion for we are not considering any specific measure $\mu$ on $M$ and we shall be dealing only with the notion of zero measure subset. Nevertheless, we notice that if
$X\subset M$ writes as a countable union $X=\bigcup \limits_{n \in \mathbb N} X_n$ of subsets $X_n\subset M$ then $X$ has zero measure in $M$ if and only if $X_n$ has zero measure in $M$ for {\it all} $n \in \mathbb N$. In terms of our notation we have therefore $\mu(X)>0$ if and only if $\mu(X_n)>0$ for {\it some} $n \in \mathbb N$.

\begin{proof}[Proof of Theorem~\ref{Theorem:measuremainfoliations}]
Let us assume that $\Omega(\fa)$ contains no leaf with finite holonomy and prove that $\Omega(\fa)$ is meager.  Because $M$ is compact there is a finite number of relatively
compact open discs $T_j \subset M, \, j=1,...,r$ such:
\begin{enumerate}
\item Each $T_j$ is transverse to $\fa$ and the closure ${\overline{T_j}}$
is contained in the
interior of a transverse disc $\Sigma_j$ to $\fa$.
\item Each leaf
of $\fa$ intersects at least one of the discs $T_j$.
\end{enumerate}

Put $T=\bigcup\limits_{j=1}^r T_j \subset M$ and define
\[
\Omega(\fa,T)=\{L\in \fa: \# (L \cap T )< \infty \}.
 \]
 Then $\Omega(\fa, T)= \bigcup \limits_{n=1} ^\infty \Omega(\fa,
 T,n)$ where
 \[
\Omega(\fa, T, n)=\{L\in \fa: \# (L \cap T )\leq n \}.
\]

\begin{Claim}
We have $\Omega(\fa)=\Omega(\fa, T)$.
\end{Claim}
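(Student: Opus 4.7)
The plan is to prove the two inclusions corresponding to the equivalence: a leaf $L$ of $\fa$ is compact if and only if $L\cap T$ is finite.

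For the implication ``$L$ compact $\Rightarrow L\cap T$ finite'' I would argue as follows. Each closure $\overline{T_j}$ sits inside the transverse disc $\Sigma_j$, hence is itself a compact transverse section to $\fa$. By transversality, every point of $L\cap\overline{T_j}$ is isolated in $L$, so $L\cap\overline{T_j}$ is discrete in $L$; since $\overline{T_j}$ is closed in $M$, it is also closed in $L$. A closed discrete subset of a compact manifold is finite, so $L\cap\overline{T_j}$ is finite for each $j$, and therefore $L\cap T\subset\bigcup_{j=1}^{r}L\cap\overline{T_j}$ is finite.

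For the converse, I would exploit the hypothesis that every leaf meets $T$ to build a finite cover of $M$ by flow-boxes whose transverse bases lie inside the $T_j$. Since $M$ is compact and the saturation of $T=\bigcup T_j$ is all of $M$, there exist finitely many flow-boxes $W_1,\dots,W_s$ covering $M$, each of the form $W_\ell\cong T'_\ell\times D_\ell$ with $T'_\ell\subset T_{j(\ell)}$ an open piece of one of the transversals and $D_\ell$ a ``leaf disc,'' so that the plaques of $\fa|_{W_\ell}$ are exactly the slices $\{t\}\times D_\ell$ for $t\in T'_\ell$. Such a cover is produced by, for each $x\in M$, following a leaf path from $x$ to a point of $T$ and thickening it transversally, then extracting a finite subcover by compactness.

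Assuming $L\cap T$ is finite, the number of plaques of $L$ inside each $W_\ell$ equals $\#(L\cap T'_\ell)\le\#(L\cap T)<\infty$. Hence $L\cap W_\ell$ is a finite union of closed slices $\{t\}\times D_\ell$, and so is closed in $W_\ell$. Because the $W_\ell$ form an open cover of $M$, this forces $L$ to be closed in $M$, and being a closed subset of the compact manifold $M$, the leaf $L$ is compact. The main technical obstacle is the flow-box construction in the second direction: the bookkeeping required to produce boxes whose bases lie in prescribed transversals $T_j$ while still covering all of $M$ is exactly where compactness of $M$ and property (2) of the family $\{T_j\}$ are used in tandem.
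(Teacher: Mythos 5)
Your proof is correct and follows essentially the same route as the paper: the forward direction rests on the fact that a compact leaf meets a compact transversal in a (closed, discrete, hence finite) set, and the converse uses a finite cover of $M$ by foliation boxes with bases in the $T_j$ (guaranteed by compactness of $M$ and property (2)) to conclude that a leaf with finitely many plaques is closed, hence compact. The only cosmetic slip is the phrase ``isolated in $L$'' where you mean isolated in $L\cap\overline{T_j}$ for the leaf topology, which your next clause already corrects.
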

\begin{proof}
Indeed, given a leaf $L\in \fa$ if $L\notin \Omega(\fa,T)$ then
there is some $j$ such that $\#(L\cap T_j)=\infty$. Since
$\overline{T_j}$ is compact there is a point $p\in \Sigma_j$
belonging to the closure of $L$ and which is accumulated by points
in $L$. Since $p \in \Sigma_j$ which is transverse to $\fa$ we
conclude that $L$ has infinitely many plaques intersecting any
distinguished neighborhood of $p$ in $M$ and therefore $L$ cannot be
compact. Conversely, suppose that $L\in\Omega(\fa, T)$ then $L$ has
only finitely many plaques in a (finite) covering of $M$ by
distinguished neighborhoods. Since $M$ is compact this implies that
$L$ is compact.
\end{proof}

From now on we shall write $\Omega(\fa,n)$ for $\Omega(\fa,T,n)$ and simply $\Omega(\fa)$ for $\Omega(\fa,T)$.
For the leaves $L\in \Omega(\fa,n)$ we put $\ord(\fa):=n$.

Because $\Omega(\fa)=\bigcup\limits_{n\in \mathbb
N}\Omega(\fa,n)$, it is then enough to show that  and $\Omega(\fa,n)$ is meager for each $n \in \mathbb N$.

Next we claim:
\begin{Claim} For each $n \in \mathbb N$, given a leaf $L\subset \Omega(\fa,n)$, there is an open neighborhood $L\subset W\subset M$, such that $\Omega(\fa,n)\cap W$   is meager.
\end{Claim}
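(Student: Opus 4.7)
The plan is to work on a local transverse section to $L$ and exploit the fact that, by our standing assumption, the holonomy group $\Hol(L)$ is infinite. I would fix $p\in L$ and choose a transverse disc $\Sigma\subset T_j$ centered at $p$, small enough that $L\cap\Sigma=\{p\}$, and write $G:=\Hol(L,\Sigma,p)\subset\Diff(\Sigma,p)$. Then I would choose a saturated open neighborhood $W$ of $L$ in $M$, obtained by covering the compact leaf $L$ by finitely many foliated boxes, so that every leaf $L'\subset W$ meets $\Sigma$ and so that for $x\in\Sigma$ sufficiently close to $p$ the $G$-pseudo-orbit of $x$ coincides with $L_x\cap\Sigma$.

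The key estimate is a uniform orbit bound: for any $L'\in\Omega(\fa,n)\cap W$ passing through $x\in\Sigma$, I would argue
\[
\#(G\cdot x)\;=\;\#(L'\cap\Sigma)\;\le\;\#(L'\cap T)\;\le\;n,
\]
so that for every $g\in G$ one has $g^{n!}(x)=x$. Consequently
\[
\Omega(\fa,n)\cap W\cap\Sigma\;\subset\;\bigcap_{g\in G}\operatorname{Fix}(g^{n!}).
\]
If $g^{n!}$ were the germ of the identity for every $g\in G$, then $G$ would have finite exponent $n!$, and by Lemma~\ref{Lemma:finiteexponentgerms}(1) it would be finite, contradicting the infiniteness of $\Hol(L)$. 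Hence I can fix $g_0\in G$ with $g_0^{n!}\ne\Id$ as a germ; then $\operatorname{Fix}(g_0^{n!})\subset\Sigma$ is a proper analytic subset in some neighborhood of $p$.

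To finish, I would saturate: in foliated coordinates $(z,w)$ with $z$ the transverse coordinate, a transverse analytic equation $\{f(z)=0\}$ saturates to the analytic set $\{f(z)=0\}$ of the chart. Patching over the finite foliated covering of $L$, the $\fa$-saturation of $\operatorname{Fix}(g_0^{n!})\cap\Sigma$ in $W$ is a proper analytic subset of $W$ containing $\Omega(\fa,n)\cap W$ (since every leaf of the latter meets $\Sigma$ near $p$). Thus $\Omega(\fa,n)\cap W$ is contained in a proper analytic subset of $W$ and is, in particular, meager.

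The main obstacle I expect is the careful bookkeeping that turns the global bound $\#(L'\cap T)\le n$ into the local orbit bound $\#(G\cdot x)\le n$: one must ensure that, after shrinking $\Sigma$ around $p$, all iterates of a given $g\in G$ relevant to the pseudo-orbit are actually defined on a common neighborhood of $p$ in $\Sigma$, and that $W$ is small enough that leaves of $\Omega(\fa,n)\cap W$ contribute to $\Sigma$ only by their plaques near $p$ and not via extra intersections with $T$ away from $L$. Once these standard foliated bookkeeping issues are in place, the conclusion follows directly from Lemma~\ref{Lemma:finiteexponentgerms}(1) and the analyticity of fixed-point sets.
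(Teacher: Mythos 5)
Your argument is correct and follows essentially the same route as the paper: bound the holonomy pseudo-orbit of any point of $\Omega(\fa,n)$ on a transversal by $n$ using the intersection with $T$, invoke Lemma~\ref{Lemma:finiteexponentgerms} together with the no-finite-holonomy hypothesis to produce a holonomy element $g_0$ with $g_0^{n!}\neq\Id$ as a germ, and trap $\Omega(\fa,n)$ near $L$ inside the proper analytic set $\operatorname{Fix}(g_0^{n!})$ (the paper uses the equivalent union $\bigcup_{\ell=1}^{n}\{h^{\ell}(z)=z\}$ for an infinite-order germ $h$). The only cosmetic differences are that you saturate to get a genuine neighborhood of $L$ in $M$ (a point the paper glosses over) and that the displayed equality $\#(G\cdot x)=\#(L'\cap\Sigma)$ should be the inequality $\le$, which is all you actually use.
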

\begin{proof}
Let us fix $n_0 \in \mathbb N$. Given a leaf $L_0 \in \Omega(\fa,n_0)$, there are two possibilities: either  $L_0$ is isolated in $\Omega(\fa,n_0)$ or it is not. If $L_0$ is isolated in $\Omega(\fa,n_0)$ then we  can find an open neighborhood $W(L_0)$ of $L_0$ in $M$ such that $\Omega(\fa,n_0)\cap W(L_0)=L_0$. Assume now that $L_0$ is not isolated in $\Omega(\fa,n_0)$. This means that for each neighborhood $W$ of $L_0$ in $M$, we have infinitely many leaves in  $\Omega(L_0,n_0)$ different from $L_0$.

Since the holonomy of $L_0$ is not finite (we are assuming that $\Omega(\fa)$ has no leaf with finite holonomy group), by Lemma~\ref{Lemma:finiteexponentgerms} there is some holonomy map germ $h \in \Hol(\fa,L_0)$ which is not a finite order map (notice that, since $L_0$ is compact, its fundamental group and, therefore, its holonomy group is finitely generated).
\end{proof}
Let therefore $L_0\in \Omega(\fa,T,n_0)$ be as above. We may choose a
base point $p\in L_0\cap T$ and a transverse disc $\Sigma_p\subset
{\overline\Sigma_p}\subset T$ to $\fa$ centered at $p$. Given a
point $z \in \Sigma_p$ we denote the leaf through $z$ by $L_z$. If
$L_z \in \Omega(\fa,n_0)$ then $ \# (L_z \cap \Sigma_p) \leq n_0$.

Take now the correspondent  of the holonomy map germ $h \in \Hol(\fa, L_0, \Sigma_p,p)$. Let
us choose a sufficiently small subdisc $W\subset \Sigma_p$ such that
the  germ $h$ has a representative $h\colon W \to \Sigma_p$ such
that the iterates  $h,h^2,...,h^{n_0  !}$ are defined in $W$. Since any leaf $L\in \Omega(\fa,n_0)$ intersects $W$ in at most $n_0$ points, we have that either this intersection is empty or given any point $z \in L\cap W$ we have $h^{n_0!}(z)=z$.
of the claim above we have $h^{\ell(z)}(z)=z$ for some $1\leq \ell(z)\leq n_0$.
For each $\ell \in \{1,...,n_0\}\subset \mathbb N$ put $X(\ell,h)=:\{z\in W: h^\ell (z) = z\}$.
Since $h$ is analytic, $X(\ell,h)$ is an analytic subset of $W$. Because $h$ is not a finite order map, $X(\ell,h)$ is a proper analytic subset of $W$, in particular it as closed subset of  real codimension $\geq 2$ with  empty interior.  From the above considerations we conclude that
\[
\Omega(\fa,n_0)\cap W \subset \bigcup\limits_{\ell=1} ^{n_0} X(\ell,h)
\]

\begin{Claim}
\label{Lemma:boundedorderclosed}
Let a leaf $L\in\mathcal{F}$ be such that
$L\subset\overline{\Omega(\mathcal{F},n)}$ for some $n\in
\mathbb{N}$. Then $L$ is compact, indeed $L\subset\Omega(\fa,n)$. In other words, the union of leaves in $\Omega(\fa,n)$ is a closed subset of $M$.
\end{Claim}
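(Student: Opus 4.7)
The strategy is to show that if $L\notin \Omega(\fa,n)$ then $L$ must intersect $T$ in at least $n+1$ distinct points, and then to propagate those $n+1$ points to nearby leaves in $\Omega(\fa,n)$ via holonomy transport, obtaining a contradiction with the bound $\#(L_k\cap T)\leq n$. The first claim of this proof (that $\Omega(\fa)=\Omega(\fa,T)$) together with the definition $\Omega(\fa,n)=\{L:\#(L\cap T)\leq n\}$ reduces the statement to verifying $\#(L\cap T)\leq n$. Assume for contradiction that this fails; then there exist $n+1$ pairwise distinct points $q_1,\dots,q_{n+1}\in L\cap T$, and for each $i$ we fix an index $j(i)$ with $q_i\in T_{j(i)}$.

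Since $L\subset \overline{\Omega(\fa,n)}$ and $q_1\in L$, I would select a sequence of leaves $L_k\in \Omega(\fa,n)$ together with points $p_k\in L_k$ such that $p_k\to q_1$, and, by passing to a subsequence, arrange that each $p_k$ lies on a fixed small transverse disc $D_1$ centered at $q_1$. Using path-connectedness of $L$, choose paths $\gamma_i\colon[0,1]\to L$ from $q_1$ to $q_i$ (with $\gamma_1$ constant). The classical tube lemma for foliations provides, for each $i$, a transverse disc $D_i\subset T_{j(i)}$ around $q_i$ and a holonomy diffeomorphism $\phi_i\colon D_1\to D_i$ along $\gamma_i$ with $\phi_i(q_1)=q_i$. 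By shrinking the common source $D_1$ finitely many times, we may arrange that $\phi_1,\dots,\phi_{n+1}$ are simultaneously defined on $D_1$. Then for $k$ large enough $p_k\in D_1$, and setting $p_k^{(i)}:=\phi_i(p_k)\in D_i\subset T_{j(i)}\subset T$, the defining property of holonomy forces each $p_k^{(i)}$ to lie on the same leaf $L_k$ as $p_k$. Since $p_k^{(i)}\to q_i$ and the $q_i$ are pairwise distinct, for all sufficiently large $k$ the points $p_k^{(1)},\dots,p_k^{(n+1)}$ are pairwise distinct. This produces at least $n+1$ distinct points in $L_k\cap T$, contradicting $L_k\in \Omega(\fa,n)$; hence $L\in \Omega(\fa,n)$.

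The main technical point I expect to encounter is the bookkeeping around the transverse sections: ensuring that each target disc $D_i$ is taken inside the open set $T_{j(i)}\subset T$, so that the transported points truly count towards $\#(L_k\cap T)$, and that a single common source disc $D_1$ can be chosen on which all $n+1$ holonomy maps are defined. Both are routine consequences of compactness of the paths $\gamma_i$ and the openness of the $T_{j(i)}$, but should be spelled out explicitly. Once in place, the contradiction yields the closedness statement of the claim.
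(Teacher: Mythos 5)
Your proof is correct and rests on the same core mechanism as the paper's: holonomy (transverse uniformity) transport of intersection points of $L$ with $T$ onto the nearby leaves of $\Omega(\fa,n)$ that accumulate on $L$, producing too many intersections and a contradiction. The only organizational difference is that the paper argues in two steps (first a contradiction from an accumulation point $q_\infty\in\overline{L}\setminus L$ to get closedness, then a briefly sketched order bound), whereas you fold both into the single statement $\#(L\cap T)\leq n$, using the earlier claim $\Omega(\fa)=\Omega(\fa,T)$ to deduce compactness; this is a clean and legitimate streamlining, and your explicit bookkeeping of the source and target discs inside the $T_{j(i)}$ fills in exactly what the paper leaves implicit.
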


\begin{proof}
Let $L\subset\overline{\Omega(\mathcal{F},n)}$. Since $M$ is compact, if $L$ is not compact, it is not closed.
Suppose then by contradiction that $L$ is not closed.  There is an accumulation point $q_{\infty}\in\overline{L}\setminus
L$. Given an
{\it arbitrarily small} transverse disc $\Sigma_{q_{\infty}}$ to
$\mathcal{F}$ centered at $q_{\infty}$, we have $\#(L\cap\Sigma_{q_{\infty}%
})=\infty$. Then there is a disc $T_{j}\subset T$ such that $L_{q_{\infty}}$
meets $T_{j}$ at an interior point say, $q\in T_{j}$. Choose now a point $p\in
L$ and a transverse disc $\Sigma_{p}$ centered at $p$. By the Transverse
uniformity lemma (\cite{Camacho-LinsNeto}), there is a map from the
disc $\Sigma_{p}$ to $\Sigma_{q_{\infty}}$ and thus to the disc $T_{j}$. We
conclude that, given $k \in \mathbb N$,  for any $w\in\Sigma_{p}$ close enough to $p$, we have
$\#(L_{w}\cap T_{j})\geq k+1$. In particular, $L_{w}$ has order greater than
$k$. Since $\Omega(\fa,n)\cap \Sigma_p$ has an accumulation point at the origin $p\in \Sigma_p$, we get a contradiction. This shows that $L$ is closed, therefore compact. Indeed, the above argumentation shows that if $\ord(L) \leq n$, so that indeed we have $L\in \Omega(\fa,n)$.
\end{proof}

Using the two claims above we can conclude that
for each $n \in \mathbb N$, there is an open cover of $M$ by open sets $W$ such that $\Omega(\fa,n)\cap W$   is meager.
Since $M$ is finite, this open cover can be chosen to be finite and then this shows that $\Omega(\fa,n)$ is a meager subset of $M$.
\end{proof}

\begin{proof}[Proof of Theorem~\ref{Theorem:maingroups}]
 If $\Omega(G)=\emptyset$ then it is meager. So we may assume that $G$ has some periodic orbit. Then, thanks to Burnside's theorem (\ref{Theorem:Burnside}) and and Proposition~\ref{Proposition:finitenesslemma} (where the existence of a periodic orbit is required) it is enough to prove the following
 claim:
\begin{Claim} If $\Omega(G)$ is not meager then  $G$ is a periodic group of finite exponent.
\end{Claim}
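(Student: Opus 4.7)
The plan is to prove the contrapositive: if $G$ is not of finite exponent, then $\Omega(G)$ is meager. The only analytic input required is the classical fact, following from the identity principle on the connected complex manifold $F$, that whenever $g\in G$ satisfies $g^k\neq \Id$ the fixed point set
\[
\mathrm{Fix}(g^k)=\{x\in F : g^k(x)=x\}
\]
is a closed, proper analytic subset of $F$, hence nowhere dense and in particular meager.

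The core of the argument is an orbit-size stratification of $\Omega(G)$. For each $N\in \mathbb N$ set $\Omega_N(G)=\{x\in F : |Gx|\leq N\}$, so that $\Omega(G)=\bigcup_{N\in \mathbb N}\Omega_N(G)$. The key combinatorial observation is that for every $x\in \Omega_N(G)$ and every $g\in G$, the $g$-orbit of $x$ is contained in the $G$-orbit of $x$ and hence has length $k\leq N$; since such a $k$ divides $N!$, we conclude $g^{N!}(x)=x$. This yields the crucial uniform inclusion
\[
\Omega_N(G)\subset \mathrm{Fix}(g^{N!})\quad\text{for every } g\in G.
\]

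Now suppose for contradiction that $G$ is not of finite exponent. Then for each $N\in \mathbb N$ we may choose some $g_N\in G$ with $g_N^{N!}\neq \Id$. By the initial analytic observation $\mathrm{Fix}(g_N^{N!})$ is meager in $F$, and the inclusion above then forces $\Omega_N(G)$ to be meager for every $N$. Since a countable union of meager sets is meager, $\Omega(G)$ itself is meager, contradicting the hypothesis. This shows $G$ has finite exponent, and in particular is periodic, as claimed.

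The main obstacle, such as it is, lies less in any single calculation than in recognising the correct quantitative decomposition: one must stratify $\Omega(G)$ by orbit size rather than by a group-theoretic invariant of $G$, because that is what turns the hypothesis ``orbit of size $\leq N$'' into the \emph{uniform} analytic constraint $g^{N!}(x)=x$ for \emph{all} $g\in G$ simultaneously, which is exactly what is needed to run the Baire-category argument. Once the stratification is set up, the proof proceeds in the same spirit as that of Theorem~\ref{Theorem:measuremainfoliations}, simply trading the holonomy map germ $h$ used there for an arbitrary $g_N\in G$ here.
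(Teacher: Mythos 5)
Your proof is correct and follows essentially the same route as the paper: stratify $\Omega(G)$ by orbit size, observe that each stratum is contained in fixed-point sets of elements of $G$, and use the fact that a proper analytic subset of the connected manifold $F$ is meager. The only cosmetic difference is that you package the paper's union $\bigcup_{\ell=1}^{k}\{x: f^{\ell}(x)=x\}$ into the single set $\mathrm{Fix}(g^{N!})$ and argue by contrapositive rather than directly.
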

\begin{proof}[proof of the claim]
Assume that $\Omega(G)$ is not meager.
We have $\Omega(G)=\{x \in F: \# \mathcal O_G(x)< \infty\}=
\bigcup\limits_{k=1} ^\infty \{x \in F: \# \mathcal O_G(x)\leq k\}$,
therefore there is some $k\in \mathbb N$ such that
$\Omega(G,k):=\{x \in F: \# \mathcal O_G(x) \leq k\}$ is not meager.
Given any   diffeomorphism $f \in G$,  the set $\Omega(f,k):=\{x \in F: \# \mathcal O_f(x) \leq k\}\supset \Omega(G,k)$ is not meager. Since $\Omega(f,k)\subset \bigcup \limits_{\ell=1}^k \{x \in F: f^{\ell}(x)=x\}$,  there is $k_f \leq k$ such that the set $X=\{x \in F:
f^{k_f}(x)=x\}$ is not meager. Since $X\subset F$ is an
analytic subset, this implies that $X=F$ (a proper analytic subset
of a connected complex  manifold  (has real codimension $\geq 2$ and therefore it) is a meager subset of  $F$). Therefore, we have $f^{k_f}=\Id$ in $F$. This shows that $G$ is periodic of finite
exponent.
\end{proof}
\end{proof}

\end{document}